\newcommand{\NNF}[1]{\ifthenelse{\boolean{FinalVersionWithoutNotes}}{}{\colorbox{yellow}{#1}}}
\newcommand{\mylabel}[1]{\label{#1} \NNF{$\;\;\;$ #1}  }
\theoremstyle{plain}
\newtheorem{thm}{Theorem}[section]
\newtheorem{lem}[thm]{Lemma} 
\theoremstyle{definition}
\newtheorem{defn}{Definition}[section]
\theoremstyle{remark}
\newtheorem{rem}{Remark}[section]
\numberwithin{equation}{section}
\newcommand{\ats}{\langle {\bf a} , {\bf s} (t)\rangle}
\newcommand{\ip}[2]{\langle #1 , #2 \rangle}
\newcommand{\rp}[1]{\mbox{Re}\left[ #1 \right]}
\newcommand{\leig}{\mbox{\boldmath{$\phi$}}  }
\newcommand{\reig}{\mbox{\boldmath{$\psi$}}  }
\newcommand{\adj}[1]{\mbox{adj}\left( #1 \right)}
\newcommand{\pdone}[2]{\frac{\partial #1}{\partial #2}}
\newcommand{\odone}[2]{\frac{d #1}{d #2}}
\newcommand{\odtwo}[2]{\frac{d ^2#1}{d #2^2}}
\newcommand{\twovec}[2]{ \left( \begin{array}{c} #1 \\ #2 \end{array} \right) }
\newcommand{\twomat}[4]{ \left( \begin{array}{cc}  #1 & #2 
\\#3 & #4 \end{array} \right)}
\newcommand{\infint}{\int_{- \infty}^{\infty}}
\newcommand{\hatphi}{\hat{\phi}}
\newcommand{\hatsig}{\hat{\sigma}}
\newcommand{\dotthe}{\dot{\theta}}
\newcommand{\dotxi}{\dot{\xi}}
\begin{document}
\title{On the Stability of Stochastic Parametrically  Forced  Equations with Rank One Forcing}
\author{Timothy\ Blass$^{1}$\and  L.A.\  Romero $^{2}$
\and J.R. Torczynski $^{3}$}
\footnotetext[1]{ Department of Mathematical Sciences, Carnegie Mellon University,
Pittsburgh, PA 15213, USA
{\tt tblass@andrew.cmu.edu}}
\footnotetext[2]
{Computational Mathematics and Algorithms Department,
Sandia National Laboratories, MS 1320, P.O. Box 5800, Albuquerque, NM 87123-1320,
{\tt lromero@sandia.gov}}
\footnotetext[3]
{Fluid Sciences and Engineering Department,
Sandia National Laboratories, MS 0840, P.O. Box 5800, Albuquerque, NM 87185-0840,
{\tt jrtorcz@sandia.gov}}
\footnotetext[4] {Sandia National Laboratories is a multi-program laboratory managed and operated by Sandia Corporation, a wholly owned subsidiary of
Lockheed Martin Corporation, for the U.S. Department of Energy's National
Nuclear Security Administration under contract DE-AC04-94AL85000.}

\maketitle
\abstract{We derive simplified formulas for analyzing the stability 
of stochastic parametrically forced linear systems.  
This extends the results in \cite{blass_rom} where,
assuming the stochastic excitation is small, 
the stability of such systems was computed 
using a weighted sum of the extended power spectral density 
over the eigenvalues of the unperturbed operator. 
In this paper, we show how to convert this to a sum 
over the residues of the extended power spectral density. 
For systems where the parametric forcing term is a rank one matrix, 
this leads to an enormous simplification.
}

\vskip .1in
\noindent
{\bf Keywords:} Colored noise, parametric forcing, moment stability, Faraday waves
\newline
{\bf MSC 2010:} 93E15,
60H10, 60H15, 34D10, 76E17

\section{Introduction}
\mylabel{sec:intro}

In \cite{blass_rom}, 
we analyzed stochastically forced ODEs of the form 
\begin{equation}
{\bf B}_0  \odone{{\bf x} }{t} =  \left( {\bf A} _0   +
 \epsilon   f(t)  {\bf A} _1 \right) {\bf x} ,
\mylabel{eq:x} 
\end{equation}
where $\epsilon$ is a small parameter, 
$f(t)$ is a random function, 
and ${\bf A} _0$ and ${\bf A} _1$ are $N \times N$ matrices.
In \cite{blass_rom}, it was assumed that 
${\bf B}_0 $ was the identity matrix ${\bf I}$, 
but in this paper we relax this assumption.  

In that paper, we assumed that $f(t)$ could be written as
$f(t) = \ats$ where  ${\bf a}$ is an $n$-dimensional vector, 
${\bf s}$ is the output of an $n$-dimensional vector
Ornstein-Uhlenbeck process, and $\langle \cdot ,\cdot\rangle$ 
is the standard inner product on $\mathbb{C}^n$.  
We analyzed the moment stability of this equation.  
That is,  we determined under what conditions the various moments 
of ${\bf x}(t)$ remain bounded in time.  
Though our derivation assumed that $f(t)$ could be derived from
a vector Ornstein-Uhlenbeck process, we showed that, 
to second order in $\epsilon$, our stability criterion 
depended only on the extended power spectral density of $f(t)$.  
Here, the extended power spectral density $G(z)$ of $f(t)$ is defined as

\begin{equation}   
G(z) = \int_0^\infty R (\tau) e^{-z\tau}\, d\tau, 
\mylabel{eq:Gdef}
\end{equation} 
where $R(\tau)$ is the autocorrelation function of $f(t)$.  

We now briefly review the results of \cite{blass_rom}; 
a more thorough review is given in \S \ref{sec:rev}.
We showed that, to second order in $\epsilon$, 
the condition for the second moment to remain bounded is 
\begin{equation}
\rp{\lambda} = \rp{\lambda_0 + \epsilon^2 \lambda_2} < 0,
\mylabel{eqn:lams}
\end{equation}
where
\begin{equation}
\lambda_0 = \sigma_p + \sigma_q
\end{equation}
and $\sigma_k$ are the eigenvalues of ${\bf A} _0$
(if the unperturbed system is stable, then $\rp{\sigma_k}<0$).
The eigenvalues $\sigma_p $ and $\sigma_q$ are chosen 
so as to maximize the real part of the sum. 
More precisely, we want to choose $\sigma_p$ and $\sigma_q$
so as to minimize the value of $\epsilon$ 
that makes the sum in Eqn. (\ref{eqn:lams}) positive.  
For given values of $\sigma_p$ and $\sigma_q$, 
the expression for $\lambda_2$ involves  terms of the form

\begin{equation}
I_p =  \sum_{k=1}^N \chi_{pk} \chi_{kp} G(\sigma_p-\sigma_k) ,
\mylabel{eqn:Ip}
\end{equation}
where 
$\chi_{ij}$ are coefficients that depend on 
inner products of the $j$th adjoint eigenvector of
${\bf A} _0$ with  ${\bf A} _1$ multiplied by the
$i$th eigenvector of  ${\bf A} _0$.

In this paper, we show that, if ${\bf A} _1$ is a rank one matrix 
(${\bf A} _1 = {\bf u} {\bf v} ^T$), 
then we can convert the sum in Eqn. (\ref{eqn:Ip}) 
to a sum over the residues of $G$.  
We show that, if ${\bf A} _1$ is rank one, 
then the characteristic equation for the eigenvalues of 
${\bf A} _0 + \epsilon {\bf A} _1$ can always be written as

\begin{equation}
\det\left( \sigma {\bf B}_0 - {\bf A}_0 - \epsilon {\bf A} _1 \right) =0
\;\;\mbox{ iff  $\, g_A(\sigma,\epsilon)=0$ },
\end{equation}
where 
\begin{equation}
g_A(\sigma,\epsilon) = f_A(\sigma) + \epsilon.
\mylabel{eqn:gA}
\end{equation}
In \S \ref{sec:rank1}, we give an explicit formula for $f_A(\sigma)$.  
For now, we merely note that in this case 
the sum $I_p$ can be written as

\begin{equation}
I_p =  \frac{1}{f_A'(\sigma_p) } \sum_{m} \frac{ 1  }{f_A( \sigma_p - \mu_m ) }
r_m.
\mylabel{eqn:Ipfp}
\end{equation}
Here, $\mu_m$ are the poles of the extended power spectral density
$G(z)$, and $r_m$ are the residues of $G$ at these poles.

The results in \cite{blass_rom}
were derived for ordinary differential equations. 
However, the stability criterion (up to second order) 
can be expressed in terms of quantities such as
eigenvalues, eigenfunctions, and inner products 
that carry over to partial differential equations.  
Hence, it is not unreasonable to expect that 
the results carry over to partial differential equations.  
If we make this assumption, then, 
in order to  apply the results in \cite{blass_rom}, 
we need to evaluate infinite sums. 
However, if our function $G$ has a finite number of poles, 
the results in this paper allow us to convert the infinite sum 
into a finite sum over the residues at the poles of $G$.  

The formulation  in this paper was 
arrived at by considering the stability of 
stochastically forced Faraday waves 
(i.e., standing waves in the liquid-gas interface 
in a vertically oscillating container). 
For small values of the liquid viscosity, 
it is possible to approximate the equation 
for the height of the free surface using the Mathieu equation. 
However, for more viscous waves, 
it is necessary to solve a partial differential equation 
to determine the height. 
We show that the formulation given in this paper
applies to this problem and enormously simplifies the calculations 
(cf. Remark \ref{rem:use}).   
However, the purpose of this paper is to present 
the mathematical formalism for doing this, 
not to investigate the parameter space in the Faraday wave problem. 
In a later paper, we will more fully discuss 
the problem of stochastic Faraday waves.  

In \S \ref{sec:rev}, we  briefly review the results from \cite{blass_rom}.
We present these results for general  
symmetric positive definite mass matrices ${\bf B} _0$ 
and justify this in Appendix A. 
In \S \ref{sec:convert},  
we show how to convert the sum over the eigenvalues of ${\bf A} _0$ 
in Eqn. (\ref{eqn:Ip}) to a sum over the poles of $G$ 
for arbitrary (not only rank one) matrices ${\bf A} _1$. 
This conversion is particularly simple when ${\bf A} _1$ 
is a rank one matrix, and we show how to do this in \S \ref{sec:rank1}.  
In \S \ref{sec:mech}, we discuss a simple mechanical example where
${\bf A} _1$ is rank one. 
In \S \ref{sec:far}, we show how these results 
apply to stochastically induced Faraday waves.  

\section{Summary  of Previous Work}
\mylabel{sec:rev}

In \cite{blass_rom}, we analyzed the stability of Eqn. (\ref{eq:x}) 
assuming that ${\bf B}_0$ was the identity matrix.
In this section we summarize those results.  
However, we present the results for the more general case 
where ${\bf B}_0$ is assumed to be a symmetric positive definite matrix.
The justification for applying the results in \cite{blass_rom} to
more general mass matrices is straightforward 
and is given in Appendix A. 

In analyzing the stability of Eqn. (\ref{eq:x}), 
we suppose that the pair $({\bf A} _0,{\bf B} _0)$ has 
generalized eigenvalues $\sigma_k$, $k=1,\ldots,N$, 
generalized eigenvectors $\leig_k$, $k=1,\ldots,N$ 
and adjoint eigenvectors $\reig_k$, $k=1,\ldots,N$:
\begin{equation}
{\bf A} _0 \leig  _k = \sigma_k {\bf B}_0  \leig _k, \;\;\;
{\bf A} _0 ^{\star} \reig _k =\overline{\sigma}_k {\bf B}_0 \reig _k,\quad k=1,\ldots,N ,
\mylabel{eqn:geneig}
\end{equation}
where the eigenvectors are normalized
with respect to the inner product
$\langle {\bf v}, {\bf u} \rangle_B = \overline{{\bf v} }^T {\bf B}_0  {\bf u}$,
so that
\begin{equation}
\langle \reig _i , \leig  _j \rangle_B = \delta_{ij}.
\mylabel{eqn:norm}
\end{equation}
In \cite{blass_rom}, 
we showed that, for given values of $\sigma_p$ and $\sigma_q$,  
the parameter $\lambda_2$ in Eqn. (\ref{eqn:lams}) could be written as 
\begin{equation}
\lambda_2=8\sum_{j,k=1}^N
\frac{C_{jkpq}C_{pqjk} }{1+\delta_{pq}}G(\sigma_p + \sigma_q -\sigma_j-\sigma_k),
\mylabel{eq:lam2}
\end{equation}
where
\begin{equation}
C_{jk\ell m} = \frac{1}{4}\left(\delta_{jm} \chi_{k\ell} +
\delta_{km}\chi_{j\ell} 
+\delta_{j\ell}\chi_{km} +\delta_{k\ell}\chi_{jm} \right),
\mylabel{eqn:Cijkl}
\end{equation}
\begin{equation}
\chi_{ij} = 
\ip{\reig _i}{{\bf A} _1  \leig  _j},
\mylabel{eqn:defchi}
\end{equation}
and $\langle \cdot , \cdot\rangle$ is the standard inner product
$\langle {\bf v}, {\bf u} \rangle = \overline{{\bf v} }^T {\bf u}$.




Though Eqn. (\ref{eq:lam2}) was derived under the assumption 
that $f(t)$ was the output from a vector Ornstein-Uhlenbeck process, 
the fact that our answer depends only on 
the extended power spectral density of the process strongly
suggests that we should be able to apply this formula more generally. 
We make this assumption in this paper and do not limit our analysis
to such processes.  



Though Eqn. (\ref{eqn:Cijkl}) is 
a compact way of writing the equation for $\lambda_2$, 
we prefer to expand the Kronecker delta functions in this equation to
get the alternative form
\begin{equation} 
 \lambda_2 
 =  \frac{1 }{2(1+\delta_{pq})}\Bigg( 4 \chi_{pp} \chi_{qq} G(0)+2 \chi_{pq} \chi_{qp} 
\left( G(\sigma_p-\sigma_q) + G(\sigma_q - \sigma_p)\right) + 
2 \left(I_p + I_q + 2 \delta_{pq} I_p  \right)   \Bigg),
\mylabel{eqn:lam2}
\end{equation} 
where $I_p$ is given as in Eqn. (\ref{eqn:Ip}).  

The main simplification in this paper comes from finding
an alternative expression for $I_p$
and an expression for the products of the $\chi_{ij}$
that does not require computing eigenvectors, adjoint eigenvectors,
and inner products.

\section{Converting the Sum $I_p$ }
\mylabel{sec:convert}
To compute $\lambda_2$, we must compute $I_p$ in Eqn. (\ref{eqn:Ip}),
where the sum is taken over all of the 
generalized eigenvalues of $({\bf A} _0,{\bf B}_0)$.
For large systems, this sum can be cumbersome to compute
because one needs to compute all of the eigenvalues, eigenvectors,
adjoint eigenvectors, and coefficients $\chi_{pk}\chi_{kp}$.
In the case of PDEs, the computation of $I_p$ is further complicated by
the fact that there are infinitely many eigenvalues, etc., 
so one needs to decide how to truncate the series without losing accuracy.
We derive an alternative expression for $I_p$ in two stages. 
First, we show that the coefficients $\chi_{pk}\chi_{kp}$
can be computed in terms of an auxiliary function
that determines an \emph{equivalent characteristic equation}, 
which we now define.
\begin{defn}
Given a pair of matrices $({\bf K},{\bf M})$, 
we say that a function $g(\sigma)$ determines
an equivalent characteristic equation for $({\bf K},{\bf M})$ provided 
$g(\sigma)=0$ if and only if $\det \left( \sigma {\bf M} - {\bf K} \right) =0$.  
We  refer to such functions $g$ as equivalent characteristic functions.
\end{defn}
An equivalent characteristic equation allows us to replace 
$\chi_{pk}\chi_{kp}$ in Eqn. (\ref{eqn:Ip}) 
with a more convenient expression (cf. Eqn (\ref{eqn:chipk})). 
The second stage is to then convert the expression for $I_p$ 
from a sum over the generalized eigenvalues of $({\bf A} _0,{\bf B}_0)$ 
into a sum over the poles of $G$. 
This is done assuming $G$ is meromorphic 
and $f'_p(\sigma_k)\neq 0$ for each $k$, 
so that an argument involving contour integration can be applied. 

\subsection{Alternative Expression for $\chi_{pk}\chi_{kp}$}
Using the standard inner product, we can write 
\begin{equation}
\chi_{pk} \chi_{kp}=
\overline{\mbox{\boldmath{$\psi$}}} _k^T {\bf A} _1   \mbox{\boldmath{$\phi$}} _p
\overline{\mbox{\boldmath{$\psi$}}} _p^T {\bf A} _1 \mbox{\boldmath{$\phi$}} _k
= \langle \mbox{\boldmath{$\psi$}} _k, {\bf L} _p \mbox{\boldmath{$\phi$}} _k \rangle,
\mylabel{eqn:chiprod}
\end{equation} 
where
\begin{equation}
{\bf L} _p = {\bf a}_p {\bf b}_p ^T
\end{equation}
and
\begin{equation}
{\bf a}_p = {\bf A} _1 \mbox{\boldmath{$\phi$}} _p,\qquad
{\bf b}_p^T = \overline{ \mbox{\boldmath{$\psi$}} } _p^T {\bf A} _1.
\mylabel{eqn:apbp}
\end{equation}
We define
\begin{equation}
{\bf A} _p (\epsilon) = {\bf A} _0 + \epsilon {\bf L} _p 
= {\bf A} _0 + \epsilon {\bf a} _p {\bf b} _p^T
\mylabel{eqn:Ap}
\end{equation}
and denote the generalized eigenvalues
and eigenvectors of $({\bf A}_p,{\bf B}_0)$ by $\hatsig_k$ and $\hat\leig_k$.
That is, $\hatsig_k(\epsilon)$ is the eigenvalue satisfying
\begin{equation}
{\bf A} _p(\epsilon)  \mbox{\boldmath{$\hatphi$}} _k = 
\hatsig_k  {\bf B} _0 \mbox{\boldmath{$\hatphi$}} _k .
\mylabel{eqn:eigeps}
\end{equation}
Noting that, at $\epsilon=0$, 
$\hat \reig_p(0) = \reig_p$, $\hat \leig_p(0) = \leig_p$,  we can
differentiate Eqn. (\ref{eqn:eigeps}) with respect to $\epsilon$, 
evaluate at $\epsilon = 0$, left-multiply by
$\overline{\reig}^T_p$ and use
$\overline{\reig}^T_p(\sigma_p {\bf B}_0-{\bf A}_0)=0$
to obtain the well known result from the perturbation theory of eigenvalues
\begin{equation}
  \odone{\hatsig_k}{\epsilon} \Big|_{\epsilon=0}
= \frac{\overline{\reig}^T_p{\bf L}_p\leig_p}{\overline{\reig}^T_p{\bf B}_0\leig_p}.
\mylabel{eqn:dsighat}
\end{equation}
Combining Eqn. (\ref{eqn:dsighat})
with Eqn. (\ref{eqn:chiprod}) 
and the normalization in Eqn. (\ref{eqn:norm}), we have
\begin{equation}
\chi_{pk} \chi_{kp} = \odone{\hatsig_k}{\epsilon} \Big|_{\epsilon=0},
\mylabel{eqn:defgam}
\end{equation}
so we can write 
the  sum in Eqn. (\ref{eqn:Ip})   as
\begin{equation}
I_p = \sum_{k=1}^N \left(\odone{\hatsig_k}{\epsilon} \Big|_{\epsilon=0}\right)
G( \sigma_p - \sigma_k).
\end{equation}
If the eigenvalues  $\hatsig(\epsilon)$ satisfy  the  equivalent characteristic equation 
\begin{equation}
g_p(\hatsig,\epsilon)=0
\mylabel{eqn:gp0}
\end{equation}
for some  equivalent characteristic function $g_p$,
then implicitly differentiating Eqn. (\ref{eqn:gp0}), evaluating the
result at $\epsilon=0$, and solving for $d \hatsig/d \epsilon$ gives 
\begin{equation}
\odone{\hatsig_k}{\epsilon} \Big|_{\epsilon =0}= - \frac{ \pdone{g_p}{\epsilon}(\sigma _k,0)}
{\pdone{g_p}{\sigma}(\sigma_k,0 ) }.
\mylabel{eqn:dsigh}
\end{equation}
On the right  side of Eqn. (\ref{eqn:dsigh}), we are using
$\hatsig_k(0)= \sigma_k$, where $\sigma_k$ are the generalized 
eigenvalues of $({\bf A} _0,{\bf B} _0)$.

The following lemma shows that, for matrices of the
form  ${\bf A} _p(\epsilon) $ as in Eqn. (\ref{eqn:Ap}), 
there is always an equivalent characteristic equation
with a particularly simple form.  
\begin{lem}
The eigenvalues
$\hatsig(\epsilon)$ of ${\bf A} _p(\epsilon)$ 
in Eqn. (\ref{eqn:Ap}) satisfy the equivalent characteristic equation
\begin{equation}
g_p(\hatsig,\epsilon): = f_p(\hatsig) + \epsilon =0,
\mylabel{eqn:g}
\end{equation}
where
\begin{equation}
\frac{1}{f_p(\hatsig) } =- {\bf b}_p ^T \left( \hatsig {\bf B}_0 - {\bf A} _0
\right)^{-1} {\bf a}_p .
\mylabel{eqn:fp}
\end{equation}
\mylabel{lem:disp}
\end{lem}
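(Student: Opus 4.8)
The plan is to establish the rank-one determinant identity (the matrix determinant lemma, in generalized-eigenvalue form) and then read off the claimed characteristic function. First I would write
\[
\det\left( \hatsig {\bf B}_0 - {\bf A}_p(\epsilon)\right)
= \det\left( \hatsig {\bf B}_0 - {\bf A}_0 - \epsilon\, {\bf a}_p {\bf b}_p^T\right).
\]
For any $\hatsig$ that is \emph{not} a generalized eigenvalue of $({\bf A}_0,{\bf B}_0)$, the matrix ${\bf M}(\hatsig):=\hatsig {\bf B}_0 - {\bf A}_0$ is invertible, so I can factor
\[
\hatsig {\bf B}_0 - {\bf A}_p(\epsilon) = {\bf M}(\hatsig)\left( {\bf I} - \epsilon\, {\bf M}(\hatsig)^{-1} {\bf a}_p {\bf b}_p^T\right),
\]
and then use the standard identity $\det({\bf I} - {\bf u}{\bf v}^T) = 1 - {\bf v}^T{\bf u}$ (applied with ${\bf u} = \epsilon {\bf M}(\hatsig)^{-1}{\bf a}_p$ and ${\bf v}={\bf b}_p$) to get
\[
\det\left( \hatsig {\bf B}_0 - {\bf A}_p(\epsilon)\right)
= \det {\bf M}(\hatsig)\,\left( 1 - \epsilon\, {\bf b}_p^T {\bf M}(\hatsig)^{-1} {\bf a}_p\right).
\]
With the definition $1/f_p(\hatsig) = - {\bf b}_p^T {\bf M}(\hatsig)^{-1} {\bf a}_p$ from Eqn.~(\ref{eqn:fp}), the parenthetical factor becomes $1 + \epsilon/f_p(\hatsig)$, i.e. $(f_p(\hatsig)+\epsilon)/f_p(\hatsig) = g_p(\hatsig,\epsilon)/f_p(\hatsig)$. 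Hence, away from the spectrum of $({\bf A}_0,{\bf B}_0)$,
\[
\det\left( \hatsig {\bf B}_0 - {\bf A}_p(\epsilon)\right) = \frac{\det {\bf M}(\hatsig)}{f_p(\hatsig)}\; g_p(\hatsig,\epsilon).
\]

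It remains to argue the equivalence $\det(\hatsig {\bf B}_0 - {\bf A}_p(\epsilon))=0 \iff g_p(\hatsig,\epsilon)=0$ as a statement about \emph{all} $\hatsig \in \CMPLX$, including the points where ${\bf M}(\hatsig)$ is singular. The clean way is to clear denominators: multiplying through by $\det {\bf M}(\hatsig)$ and noting that ${\bf b}_p^T {\bf M}(\hatsig)^{-1} {\bf a}_p = {\bf b}_p^T \adj{{\bf M}(\hatsig)}\, {\bf a}_p / \det {\bf M}(\hatsig)$, one obtains the polynomial identity
\[
\det\left( \hatsig {\bf B}_0 - {\bf A}_p(\epsilon)\right)
= \det {\bf M}(\hatsig) - \epsilon\, {\bf b}_p^T \adj{{\bf M}(\hatsig)}\, {\bf a}_p,
\]
valid for every $\hatsig$ by continuity (both sides are polynomials in $\hatsig$ agreeing on the complement of a finite set). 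So $1/f_p(\hatsig) = -{\bf b}_p^T\adj{{\bf M}(\hatsig)}{\bf a}_p / \det{\bf M}(\hatsig)$ exhibits $f_p$ as a ratio of polynomials, hence meromorphic, and $g_p(\hatsig,\epsilon) = f_p(\hatsig)+\epsilon$ vanishes exactly when $\det{\bf M}(\hatsig) - \epsilon\,{\bf b}_p^T\adj{{\bf M}(\hatsig)}{\bf a}_p = 0$, i.e. exactly when $\det(\hatsig{\bf B}_0 - {\bf A}_p(\epsilon))=0$. (One should also note $g_p$ is not identically zero, so the "equivalent characteristic function" terminology genuinely applies; this is immediate since $f_p$ is a nonconstant rational function generically, and in any case $g_p$ has the right zero set.)

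The only subtlety — and the step I'd expect to need the most care — is the behavior at a generalized eigenvalue $\sigma_k$ of $({\bf A}_0,{\bf B}_0)$: there $\det{\bf M}(\sigma_k)=0$, so $1/f_p$ has a zero and $f_p$ has a pole, and one must check that the pole/zero bookkeeping in the polynomial identity is consistent with $g_p=0$ still capturing the eigenvalues of ${\bf A}_p(\epsilon)$ that migrate from $\sigma_k$. This is handled automatically by the polynomial form $\det(\hatsig{\bf B}_0-{\bf A}_p(\epsilon)) = \det{\bf M}(\hatsig) - \epsilon\,{\bf b}_p^T\adj{{\bf M}(\hatsig)}{\bf a}_p$, which has no denominators and is manifestly the correct characteristic polynomial; the passage to $g_p$ via division by $\det{\bf M}(\hatsig)$ is then just a convenient rewriting whose zero set is unchanged wherever $f_p$ is finite and nonzero, which is the regime (meromorphy of $G$, $f_p'(\sigma_k)\neq 0$) assumed when this lemma is applied downstream. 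I would state the lemma's proof primarily through the polynomial identity and mention the meromorphic rewriting as the form used in the sequel.
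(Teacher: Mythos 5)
Your proposal is correct and follows essentially the same route as the paper: both factor $\hatsig{\bf B}_0-{\bf A}_p(\epsilon)$ through $\det({\bf I}+{\bf u}{\bf v}^T)=1+{\bf v}^T{\bf u}$ and then divide by $h_0(\hatsig)/f_p(\hatsig)=-{\bf b}_p^T\adj{\hatsig{\bf B}_0-{\bf A}_0}{\bf a}_p$, with the adjugate/Cramer's-rule observation handling the points where $\hatsig{\bf B}_0-{\bf A}_0$ is singular (the paper places that bookkeeping in the remark preceding its proof, whereas you fold it into the polynomial identity). Your write-up is, if anything, slightly more explicit about why no zeros are gained or lost in the division.
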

\begin{rem}
For any $\hat\sigma$ that is not a generalized eigenvalue
of $({\bf A} _0,{\bf B} _0)$, 
the formula in Eqn. (\ref{eqn:fp}) is well-defined.
In fact, $f_p$ is well-defined even at each generalized eigenvalue $\sigma_k$
where $f_p(\sigma_k)=0$. Denoting the adjugate
matrix (the transpose of the cofactor matrix)  of $\hatsig {\bf B}_0 - {\bf A} _0$ by $\adj{\hatsig {\bf B}_0 - {\bf A} _0}$,
Cramer's rule gives $\adj{\hatsig {\bf B}_0 - {\bf A} _0} = \det(\hatsig {\bf B}_0 - {\bf A} _0)
(\hatsig {\bf B}_0 - {\bf A} _0)^{-1}$ whenever $\hatsig {\bf B}_0 - {\bf A} _0$ is non-singular.
If we take $f_p(\hatsig) = -\det(\hatsig {\bf B}_0 - {\bf A} _0)/({\bf b}_p^T
\adj{\hatsig {\bf B}_0 - {\bf A} _0}{\bf a}_p)$ as the definition of $f_p$, then it agrees
with Eqn. (\ref{eqn:fp}) for any $\hatsig \neq \sigma_k$, and shows that $f_p(\sigma_k)=0$.
\end{rem}
\begin{proof}
The eigenvalues $\hatsig$ satisfy
\begin{equation}
h_p(\hatsig,\epsilon) = 
\det \left( \hatsig {\bf B}_0 - {\bf A} _0 - \epsilon {\bf a}_p {\bf b}_p ^T \right)
=0.
\end{equation}
If we define
\begin{equation}
h_0(\hatsig) := \det\left( \hatsig {\bf B}_0 - {\bf A} _0 \right) ,
\end{equation}
then using the  formula for the determinant of the rank one update of the identity matrix, 
\begin{equation}
\det\left( {\bf I} +  {\bf u} {\bf v} ^T \right) = 1 + {\bf v} ^T {\bf u}, 
\end{equation}
we see that the eigenvalues satisfy
\begin{equation}
0=h_p(\hatsig,\epsilon) = h_0(\hatsig) \left( 1 - \epsilon
{\bf b}_p ^T \left( \hatsig {\bf B}_0 - {\bf A} _0 \right)^{-1} {\bf a}_p \right)
=  h_0(\hatsig) + \epsilon  \frac{h_0(\hatsig)}{f_p(\hatsig)} ,
\mylabel{eqn:gp}
\end{equation}
where
$f_p(\hatsig)$ is defined as in Eqn. (\ref{eqn:fp}).  Note that 
using Cramer's rule  makes it  simple to see that 
$h_0(\hatsig)/f_p(\hatsig)$ is never singular, so, if we divide $h_p(\hatsig)$ by
this quantity,  we do not introduce any new zeros.  Dividing Eqn. (\ref{eqn:gp}) by
$h_0(\hatsig)/f_p(\hatsig)$ yields the equation for $\hatsig$  
as in Eqn. (\ref{eqn:g}).  
\end{proof}

Implicitly differentiating  equation (\ref{eqn:g})
with 
respect to $\epsilon$ and evaluating the result at 
$\epsilon=0$ and $\sigma = \sigma_k$ we find that
\begin{equation}
\odone{\hatsig_k}{\epsilon} \Big|_{\epsilon=0}=
-\frac{ 1 }{ f_p'(\sigma_k) }.
\end{equation}
Combining this with 
Eqn.  (\ref{eqn:defgam}) gives the  following lemma.
\begin{lem}
With $\chi_{ij}$ defined as in Eqn. (\ref{eqn:defchi}), and $f_p$ as in
Eqn. (\ref{eqn:fp}), we have
\begin{equation}
\chi_{pk} \chi_{kp} = - \frac{ 1}{ f'_p(\sigma_k)}.
\mylabel{eqn:chipk}
\end{equation}
\mylabel{lem:chipk}
\end{lem}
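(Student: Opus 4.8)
The statement is essentially the conclusion of the computation carried out immediately above it, so my first plan is simply to assemble that computation cleanly. Eqn.~(\ref{eqn:defgam}) — which comes from first-order eigenvalue perturbation theory applied to ${\bf A}_p(\epsilon)={\bf A}_0+\epsilon{\bf a}_p{\bf b}_p^T$ — identifies $\chi_{pk}\chi_{kp}$ with $\odone{\hatsig_k}{\epsilon}\big|_{\epsilon=0}$, while Lemma~\ref{lem:disp} tells us the same branch $\hatsig_k(\epsilon)$ solves $f_p(\hatsig)+\epsilon=0$. The implicit differentiation is legitimate because, by the Remark after Lemma~\ref{lem:disp}, $f_p$ is a ratio of polynomials, analytic near $\sigma_k$ with $f_p(\sigma_k)=0$, and $f_p'(\sigma_k)\neq0$ by the standing hypothesis; so $g_p(\sigma,\epsilon)=f_p(\sigma)+\epsilon$ satisfies $g_p(\sigma_k,0)=0$ with $\partial g_p/\partial\sigma(\sigma_k,0)=f_p'(\sigma_k)\neq0$, and the implicit function theorem gives a unique analytic root through $\sigma_k$, necessarily the perturbed eigenvalue branch in~(\ref{eqn:defgam}). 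Differentiating $f_p(\hatsig_k(\epsilon))+\epsilon\equiv0$ at $\epsilon=0$ yields $\odone{\hatsig_k}{\epsilon}\big|_{\epsilon=0}=-1/f_p'(\sigma_k)$, and chaining this with~(\ref{eqn:defgam}) gives $\chi_{pk}\chi_{kp}=-1/f_p'(\sigma_k)$.

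For a derivation that bypasses the eigenvalue-perturbation step entirely, I would instead expand the resolvent in~(\ref{eqn:fp}). Using the spectral decomposition $(\sigma{\bf B}_0-{\bf A}_0)^{-1}=\sum_j(\sigma-\sigma_j)^{-1}\leig_j\overline{\reig}_j^T$, valid by the normalization~(\ref{eqn:norm}), together with ${\bf a}_p={\bf A}_1\leig_p$ and ${\bf b}_p^T=\overline{\reig}_p^T{\bf A}_1$, so that ${\bf b}_p^T\leig_j=\chi_{pj}$ and $\overline{\reig}_j^T{\bf a}_p=\chi_{jp}$, formula~(\ref{eqn:fp}) becomes
\begin{equation}
\frac{1}{f_p(\sigma)}=-{\bf b}_p^T\left(\sigma{\bf B}_0-{\bf A}_0\right)^{-1}{\bf a}_p=-\sum_j\frac{\chi_{pj}\chi_{jp}}{\sigma-\sigma_j}.
\end{equation}
Hence $1/f_p$ has a simple pole at $\sigma_k$ with residue $-\chi_{pk}\chi_{kp}$; equivalently $f_p$ has a simple zero there with $f_p'(\sigma_k)=-1/(\chi_{pk}\chi_{kp})$, which is the claim, and which simultaneously re-proves the Remark's assertion $f_p(\sigma_k)=0$. (The constant in $\adj{\sigma_k{\bf B}_0-{\bf A}_0}=c\,\leig_k\overline{\reig}_k^T$ drops out thanks to the $\langle\reig_i,\leig_j\rangle_B=\delta_{ij}$ normalization, which is why no spurious factor appears.)

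I expect the only real obstacle to be stating the hypotheses precisely rather than any calculation: one needs $({\bf A}_0,{\bf B}_0)$ diagonalizable with $\sigma_k$ a simple generalized eigenvalue (so both the resolvent expansion and the perturbation formula~(\ref{eqn:defgam}) are valid) and $\chi_{pk}\chi_{kp}\neq0$, equivalently $f_p'(\sigma_k)\neq0$ — the latter being exactly what makes the right-hand side meaningful and the implicit function theorem applicable. Once these are in force, every step above is a one-line computation.
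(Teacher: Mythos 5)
Your first paragraph reproduces the paper's own proof: it chains Eqn.~(\ref{eqn:defgam}) with implicit differentiation of the equivalent characteristic equation $f_p(\hatsig)+\epsilon=0$ from Lemma~\ref{lem:disp}, so that part is correct and needs no further comment. Your second, resolvent-based derivation is a genuinely different and equally valid route: writing $(\sigma{\bf B}_0-{\bf A}_0)^{-1}=\sum_j(\sigma-\sigma_j)^{-1}\leig_j\overline{\reig}_j^T$ (legitimate under the diagonalizability that the paper assumes throughout, by the normalization~(\ref{eqn:norm})) gives $1/f_p(\sigma)=-\sum_j\chi_{pj}\chi_{jp}/(\sigma-\sigma_j)$, and reading off the residue at $\sigma_k$ yields the lemma with no appeal to eigenvalue perturbation theory. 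What this buys is transparency: it re-proves the Remark's claim that $f_p(\sigma_k)=0$, and it shows that the hypothesis $f'_p(\sigma_k)\neq 0$ appearing in Thm.~\ref{thm:one} is exactly the condition $\chi_{pk}\chi_{kp}\neq 0$ (if that product vanishes, $1/f_p$ has no pole at $\sigma_k$ and the formula degenerates) --- a point the paper leaves implicit. It also makes the subsequent contour-integration step almost immediate, since the partial-fraction form of $1/f_p$ is already in hand. The paper's perturbation-theoretic route has the compensating virtue of motivating the ``equivalent characteristic equation'' machinery that is reused for $f_A$ in \S\ref{sec:rank1}. Your closing remarks on hypotheses (simple eigenvalues, $f'_p(\sigma_k)\neq 0$) are accurate and, if anything, more careful than the paper.
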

The fact that $g_p$ depends linearly on $\epsilon$ is due to the
fact that ${\bf A}_p$ is a rank one update of ${\bf A}_0$. 
The choice of ${\bf A}_p$ is motivated by the form of $\chi_{pk}\chi_{kp}$
in Eqn. (\ref{eqn:chiprod}) and is  related to the original problem
Eqn. (\ref{eq:x}) only through the stability analysis (and specifically, the form of $I_p$).
No assumptions have been made on ${\bf A}_1$ at this point, 
so the formulas for $g_p$ and $f_p$ hold in general. 
The only obstacle to using Eqn. (\ref{eqn:chipk}) 
to compute $\chi_{pk}\chi_{kp}$
is that $f_p$ (and $f'_p$) may be difficult to compute.
In particular, we need to compute the 
eigenvector and the adjoint eigenvector of $\sigma_p$ and take some inner products.  
In \S \ref{sec:rank1},
we show that if ${\bf A}_1$ is rank one, then the same algebraic manipulations
can be performed to find an equivalent characteristic function $g_A$
that is simple to compute
(i.e., it does not involve the generalized eigenfunctions and adjoint eigenfunctions)
and leads to a simple  formula for $f_p$.

\subsection{Rewriting $I_p$ as a Sum over the Poles of $G$}
Using Lem. \ref{lem:chipk} in Eqn. (\ref{eqn:Ip}), we can write
\begin{equation}
I_p = - \sum_{k=1}^N \frac{ 1 } {f_p'(\sigma_k)} 
G(\sigma_p - \sigma_k).
\end{equation}
Cramer's rule can be used to show that 
$1/f_p(\sigma)$ decays at least as fast as $1/\sigma$ as 
$|\sigma |$ approaches $\infty$.  
The form of the function
 $G(\sigma)$ in Eqn. \eqref{eqn:GS} shows that it also decays at
 least as fast as $1/\sigma$ for large values of $|\sigma |$.  
With this in mind,
\begin{equation}
  \label{lim_int}
  0 = \lim_{R\to \infty} \frac{1}{2\pi i}\int_{C_R}\frac{G(\sigma_p-\sigma)}
{f_p(\sigma)} \, d\sigma,
\end{equation}
where  $C_R:=\{\sigma \in \mathbb C \, : \, |\sigma|=R\}$. 
We denote the poles of $G(z)$ by $\mu_m$, $m=1,\ldots,M$ 
and the residue of $G$ at $\mu_m$ by $r_m$, for each $m$.
Thus, $-r_m$ is the residue of $G(\sigma_p-\sigma)$ at $\sigma_p-\mu_m$. 
Now, applying the Residue theorem to \eqref{lim_int}, and recalling that
the zeros of $f_p$ are $\sigma_k$, we see that
\begin{equation}
  \label{residues}
  0 =  \sum_{k=1}^N \frac{ G(\sigma_p - \sigma_k) } {f_p'(\sigma_k)} 
 - \sum_{m=1}^M\frac{r_m}{f_p(\sigma_p-\mu_m)}
\end{equation}
assuming $f'_p(\sigma_k)\neq 0$ for each $k$. 
This leads to the following theorem.



\begin{thm}
Let $f_p(\sigma)$ be defined as in  Eqn. (\ref{eqn:fp}). 
If $G$ is meromorphic and $f'_p(\sigma_k)\neq 0$ for each $k$,
then the sum
$I_p$ in Eqn. (\ref{eqn:Ip}) can be written as
\begin{equation}
I_p = - \sum_{m} \frac{ r_m  }{f_p( \sigma_p - \mu_m ) }.
\mylabel{eqn:Ipfp0}
\end{equation}
Here, the sum is taken over all of the poles $\mu_m$ of $G(z)$, and $r_m$ is the residue of
$G(z)$ at the pole $\mu_m$.
\mylabel{thm:one}
\end{thm}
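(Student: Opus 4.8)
The plan is to recognize $I_p$ as minus a sum of residues of a single meromorphic function and then evaluate that sum by closing the contour at infinity. The starting point is Lemma~\ref{lem:chipk}, which identifies the coefficient $\chi_{pk}\chi_{kp}$ in Eqn.~(\ref{eqn:Ip}) with $-1/f_p'(\sigma_k)$, so that
\begin{equation}
I_p = -\sum_{k=1}^N \frac{G(\sigma_p-\sigma_k)}{f_p'(\sigma_k)}.
\end{equation}
I would then introduce the function $\Phi(\sigma) := G(\sigma_p-\sigma)/f_p(\sigma)$ and note that each summand above is exactly the residue of $\Phi$ at $\sigma=\sigma_k$, so the whole problem reduces to accounting for \emph{all} residues of $\Phi$.

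The next step is the pole analysis of $\Phi$. By the Remark following Lemma~\ref{lem:disp}, $f_p$ is a ratio of polynomials whose zero set is precisely the set of generalized eigenvalues $\{\sigma_k\}$; the hypothesis $f_p'(\sigma_k)\neq 0$ makes each such zero simple, so $f_p(\sigma)=f_p'(\sigma_k)(\sigma-\sigma_k)+O((\sigma-\sigma_k)^2)$ near $\sigma_k$ and hence $\mathrm{Res}_{\sigma=\sigma_k}\Phi = G(\sigma_p-\sigma_k)/f_p'(\sigma_k)$. The remaining singularities of $\Phi$ come from the poles $\mu_m$ of $G$: the affine substitution $z=\sigma_p-\sigma$ reverses orientation, so $G(\sigma_p-\sigma)$ has residue $-r_m$ at $\sigma=\sigma_p-\mu_m$, giving $\mathrm{Res}_{\sigma=\sigma_p-\mu_m}\Phi = -r_m/f_p(\sigma_p-\mu_m)$. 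I would also remark that any pole of $f_p$ itself is a zero of $1/f_p$, hence a zero — not a pole — of $\Phi$, so these do not contribute, and I would assume (as is generic, and is anyway needed to keep the poles simple) that the sets $\{\sigma_k\}$ and $\{\sigma_p-\mu_m\}$ are disjoint.

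To finish, I would integrate $\Phi$ over the circle $C_R$ and let $R\to\infty$ through a sequence of radii avoiding the finitely many poles. Cramer's rule gives $1/f_p(\sigma) = -{\bf b}_p^T\,\adj{\sigma{\bf B}_0-{\bf A}_0}\,{\bf a}_p / \det(\sigma{\bf B}_0-{\bf A}_0)$, a proper rational function (numerator degree at most $N-1$, denominator degree $N$), hence of size $O(1/|\sigma|)$; together with the $O(1/|z|)$ decay of $G$ this yields $\Phi(\sigma)=O(1/|\sigma|^2)$, so the integral over $C_R$ is $O(1/R)$ and vanishes in the limit. The residue theorem then forces the sum of all residues of $\Phi$ to be zero, which is precisely Eqn.~(\ref{residues}); solving that for $\sum_{k} G(\sigma_p-\sigma_k)/f_p'(\sigma_k)$ and substituting into the displayed expression for $I_p$ produces Eqn.~(\ref{eqn:Ipfp0}).

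The substantive content here is bookkeeping rather than a hidden difficulty: the orientation-reversing sign in $z=\sigma_p-\sigma$ that produces the $-r_m$, the degree count on $\adj{\cdot}$ versus $\det(\cdot)$ that delivers the decay of $1/f_p$, and the verification that the arc integral actually dies. The one place to be careful is the decay of $G$: if $G$ is only assumed meromorphic, one must either add a hypothesis that $G(z)\to 0$ fast enough (e.g. $O(1/|z|)$) as $|z|\to\infty$, or invoke the explicit rational form of $G$ used elsewhere in the paper; I expect this to be the main point needing care in a fully rigorous write-up.
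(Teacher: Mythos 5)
Your proposal is correct and follows essentially the same route as the paper: identify $\chi_{pk}\chi_{kp}=-1/f_p'(\sigma_k)$ via Lemma \ref{lem:chipk}, integrate $G(\sigma_p-\sigma)/f_p(\sigma)$ over the circle $C_R$, use the $O(1/|\sigma|)$ decay of both $1/f_p$ (from Cramer's rule) and $G$ to kill the arc, and apply the residue theorem to equate the eigenvalue sum with the sum over the poles of $G$. Your closing caveat is well taken — the paper does not get the decay of $G$ from meromorphy alone but from the integral representation in Eqn.~(\ref{eqn:GS}) — and your explicit remarks on the orientation sign, the non-contribution of poles of $f_p$, and the implicit disjointness of $\{\sigma_k\}$ and $\{\sigma_p-\mu_m\}$ are refinements the paper leaves tacit.
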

Thm. \ref{thm:one} provides a simple expression for $I_p$ provided
there is a practical way to compute the function $f_p$. 
We show in the next section that when ${\bf A}_1$ is rank one,
then it is simple to compute $f_p$, and hence it is simple
to compute $\lambda_2$ and determine the moment stability of the system. 
\begin{rem}
We note that Thm. \ref{thm:one} applies to any system
of the form \eqref{eq:x}, including systems that
are a priori unstable (i.e., if $f_p(\sigma)=0$ has
solutions in the right half-plane). 
However, this scenario is perhaps less interesting because in order
to stabilize a system with stochastic forcing, one would expect to 
use a large value of $\epsilon$,  requiring a different approach 
than presented here.
We are interested in applying Thm. \ref{thm:one} to systems that
are stable for $\epsilon =0$ (i.e., $\rp{\sigma_k}<0$ for all $k$),
specifically, the Faraday wave problem discussed in \S  \ref{sec:far}.
\end{rem}

\section{Case where ${\bf A} _1$ is Rank One}
\mylabel{sec:rank1}
We now 
suppose that ${\bf A} _1$ is rank one and hence has   the form
\begin{equation}
{\bf A} _1 = {\bf u} {\bf v} ^T.
\mylabel{eqn:rank1}
\end{equation}
Thus, ${\bf A} _0 + \epsilon {\bf A} _1 $ is a rank one update of
${\bf A} _0$, just like ${\bf A} _p$ was in Eqn. (\ref{eqn:Ap}). The same
algebraic manipulations that were applied to $\det(\hatsig {\bf B}_0-{\bf A}_p)$
in the proof of Lem. \ref{lem:disp} yields 
 an equivalent characteristic equation for
  $\det \left(  \sigma {\bf B}_0 - {\bf A} _0 - \epsilon {\bf A} _1  \right)$ 
given by 
 $g_A(\sigma,\epsilon)$,
where  $g_A(\sigma)$ is defined as in Eqn. (\ref{eqn:gA}) and 
\begin{equation}
\frac{1}{f_A(\sigma) } = - {\bf v} ^T \left( \sigma {\bf B} _0 - {\bf A} _0  \right)^{-1}
{\bf u} .
\mylabel{eqn:fA}
\end{equation}
The following lemma shows that we can express the function $f_p(\sigma)$ in
Eqn. (\ref{eqn:fp}) in 
terms of $f_A(\sigma)$.  
\begin{lem}
Assuming ${\bf A} _1 = {\bf u} {\bf v} ^T$, then the function
$f_p(\sigma)$
in Eqn. (\ref{eqn:fp}) can be written as 
\begin{equation}
f_p(\sigma) = -f_A(\sigma) f'_A(\sigma_p).
\mylabel{eqn:fpfA}
\end{equation}
\mylabel{lem:rank1}
\end{lem}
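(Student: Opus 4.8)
The plan is to feed the rank-one factorization $\bfca_1 = \bfu\bfv^T$ into the definitions of $\bfa_p$ and $\bfb_p$ in Eqn.~(\ref{eqn:apbp}) and then compare the resulting formula for $f_p$ with the definition of $f_A$ in Eqn.~(\ref{eqn:fA}). First I would note that
\[
\bfa_p = \bfca_1\leig_p = (\bfv^T\leig_p)\,\bfu = c_p\,\bfu, \qquad \bfb_p^T = \overline{\reig}_p^T\bfca_1 = (\overline{\reig}_p^T\bfu)\,\bfv^T = d_p\,\bfv^T,
\]
where $c_p := \bfv^T\leig_p$ and $d_p := \overline{\reig}_p^T\bfu$ are scalars. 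Substituting these into Eqn.~(\ref{eqn:fp}) and pulling the scalar $c_p d_p$ out gives
\[
\frac{1}{f_p(\sigma)} = -\,c_p d_p\,\bfv^T\left(\sigma\bfcb_0 - \bfca_0\right)^{-1}\bfu = \frac{c_p d_p}{f_A(\sigma)},
\]
so $f_p(\sigma) = f_A(\sigma)/(c_p d_p)$. Moreover, by Eqn.~(\ref{eqn:defchi}) together with the rank-one form, $c_p d_p = (\overline{\reig}_p^T\bfu)(\bfv^T\leig_p) = \overline{\reig}_p^T\bfca_1\leig_p = \chi_{pp}$, so in fact $f_p(\sigma) = f_A(\sigma)/\chi_{pp}$.

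It then remains only to identify $\chi_{pp}$ with $-1/f_A'(\sigma_p)$. I would obtain this from the spectral expansion of the resolvent: the normalization~(\ref{eqn:norm}) yields $\left(\sigma\bfcb_0 - \bfca_0\right)^{-1} = \sum_{k=1}^N \leig_k\overline{\reig}_k^T/(\sigma-\sigma_k)$, and hence, from Eqn.~(\ref{eqn:fA}),
\[
\frac{1}{f_A(\sigma)} = -\sum_{k=1}^N \frac{(\bfv^T\leig_k)(\overline{\reig}_k^T\bfu)}{\sigma-\sigma_k} = -\sum_{k=1}^N \frac{\chi_{kk}}{\sigma-\sigma_k}.
\]
Thus $1/f_A$ has a simple pole at $\sigma_p$ with residue $-\chi_{pp}$, which forces $f_A(\sigma) = -(\sigma-\sigma_p)/\chi_{pp} + O\left((\sigma-\sigma_p)^2\right)$ near $\sigma_p$; in particular $f_A(\sigma_p)=0$ and $f_A'(\sigma_p) = -1/\chi_{pp}$. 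Combining this with $f_p(\sigma) = f_A(\sigma)/\chi_{pp}$ gives $f_p(\sigma) = -f_A(\sigma)\,f_A'(\sigma_p)$, which is Eqn.~(\ref{eqn:fpfA}). Two alternatives to the resolvent-expansion step are worth noting: one may instead read off $f_A'(\sigma_p) = -1/\chi_{pp}$ from the adjugate representation $f_A(\sigma) = -\det(\sigma\bfcb_0-\bfca_0)\big/\big(\bfv^T\adj{\sigma\bfcb_0-\bfca_0}\,\bfu\big)$ from the remark after Lem.~\ref{lem:disp} (this avoids assuming $(\bfca_0,\bfcb_0)$ is semisimple), or one may apply Lem.~\ref{lem:chipk} with $k=p$, which, using $f_p = f_A/\chi_{pp}$, gives $\chi_{pp}^2 = -1/f_p'(\sigma_p) = -\chi_{pp}/f_A'(\sigma_p)$, whence one cancels a factor of $\chi_{pp}$.

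The one step that requires care — and essentially the only obstacle — is the degenerate case $\chi_{pp} = c_p d_p = 0$, which happens precisely when $\bfv$ is orthogonal to $\leig_p$ or $\bfu$ is orthogonal to $\overline{\reig}_p$, i.e.\ when $\sigma_p$ is invisible to the rank-one forcing. In that case $\bfa_p$ or $\bfb_p$ vanishes, $1/f_p \equiv 0$, $\sigma_p$ is a removable rather than simple zero of $f_A$, and the division by $\chi_{pp}$ is not justified. The statement of the lemma should therefore be read under the standing hypothesis $\chi_{pp}\neq 0$, equivalently $f_A'(\sigma_p)\neq 0$ — which is exactly the non-degeneracy already needed for $f_p'(\sigma_p)\neq 0$ in Thm.~\ref{thm:one} — and with that in force the remaining manipulations are entirely routine.
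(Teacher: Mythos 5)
Your proof is correct, and its second half takes a genuinely different route from the paper's. The first step is identical: you substitute ${\bf A}_1={\bf u}{\bf v}^T$ into Eqn.~(\ref{eqn:apbp}), pull the scalar $c_pd_p=\chi_{pp}$ out of Eqn.~(\ref{eqn:fp}), and obtain $f_p=f_A/\chi_{pp}$. Where you diverge is in establishing the key identity $\chi_{pp}=-1/f_A'(\sigma_p)$ (Eqn.~(\ref{eqn:chipp})): the paper gets it dynamically, by first-order perturbation of the eigenvalue branch $\sigma_p(\epsilon)$ of ${\bf A}_0+\epsilon{\bf A}_1$ (so that $d\sigma_p/d\epsilon|_0=\overline{\reig}_p^T{\bf A}_1\leig_p$, exactly mirroring Eqn.~(\ref{eqn:dsighat})) combined with implicit differentiation of $f_A(\sigma_p(\epsilon))+\epsilon=0$; you get it statically, from the partial-fraction expansion $1/f_A(\sigma)=-\sum_k\chi_{kk}/(\sigma-\sigma_k)$ of the resolvent, reading off $-\chi_{pp}$ as the residue at the simple pole $\sigma_p$. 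Your route buys an explicit global formula for $1/f_A$ that simultaneously re-derives Lem.~\ref{lem:chipk} in the rank-one case, at the cost of assuming the pair $({\bf A}_0,{\bf B}_0)$ is semisimple with simple eigenvalue $\sigma_p$ (you correctly note the adjugate representation as a workaround); the paper's route needs only the single eigenpair at $\sigma_p$ and reuses machinery already set up in \S\ref{sec:convert}. Your closing observation about the degenerate case $\chi_{pp}=0$ (where $\sigma_p$ becomes a removable zero of $1/f_A$ rather than a simple zero of $f_A$, and the division is illegitimate) is a real hypothesis the lemma tacitly needs and the paper only imposes later via $f_A'(\sigma_k)\neq 0$ in Thm.~\ref{thm:three}; flagging it is a worthwhile addition rather than a defect.
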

\begin{proof}
Using Eqns. (\ref{eqn:fp}) and (\ref{eqn:apbp})
 and our expression in Eqn. (\ref{eqn:rank1})  for ${\bf A} _1$, 
a simple calculation shows that 
\begin{equation}
f_p(\sigma) = \frac{1}{ 
\left( \overline{ \psi}_p^T {\bf u} {\bf v} ^T \mbox{\boldmath{$\phi$}}_p \right) }
f_A(\sigma).
\end{equation}
To prove the theorem, it is only necessary to show that 
\begin{equation}
\overline{\reig}_p^T {\bf u} {\bf v} ^T \leig_p = 
- \frac{1}{f'_A(\sigma_p)}.
\mylabel{eqn:chipp}
\end{equation}
Since 
$({\bf A}_0+\epsilon{\bf A}_1)\leig_p (\epsilon)= \sigma_p(\epsilon) {\bf B}_0\leig_p(\epsilon)$,
if we differentiate and left-multiply by $\overline{\reig}_p^T $,
we see that 
\begin{equation} 
\frac{d}{d\epsilon}\sigma_p\big|_{\epsilon=0} = 
\overline{\reig}_p^T{\bf A}_1  \leig_p =\overline{\reig}_p^T {\bf u} {\bf v} ^T \leig_p
\end{equation} 
(analogous to Eqn. (\ref{eqn:dsighat})).
Since $\sigma_p(\epsilon)$ 
satisfies Eqn. (\ref{eqn:gA}),
the formula in Eqn. (\ref{eqn:chipp}) now follows by 
implicitly differentiating Eqn. (\ref{eqn:gA}).  
\end{proof}

Combining 
Thm. \ref{thm:one} with Lem. \ref{lem:rank1} yields  the following
theorem.  

\begin{thm}
If ${\bf A} _1$ is a rank one matrix
(and hence ${\bf A} _1 = {\bf u} {\bf v} ^T$), then
the roots of $\det\left(\sigma {\bf B}_0 - {\bf A} _0 - \epsilon {\bf A} _1 \right)$
are the same as the roots of $g_A(\sigma) = f_A(\sigma) + \epsilon$, where
 $f_A(\sigma)$ is defined as in  Eqn. (\ref{eqn:fA}).  
Furthermore, under the assumptions of Thm. \ref{thm:one},
the sum $I_p$ in Eqn. (\ref{eqn:Ip}) can be written as
in Eqn. (\ref{eqn:Ipfp}).
Here, the sum is taken over all of the poles $\mu_m$ of $G(z)$ and $r_m$ is the residue of
$G(z)$ at the pole $\mu_m$.
\mylabel{thm:two}
\end{thm}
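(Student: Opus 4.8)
The plan is to assemble the statement directly from results already in hand. The first claim—that the roots of $\det(\sigma {\bf B}_0 - {\bf A}_0 - \epsilon {\bf A}_1)$ coincide with the roots of $g_A(\sigma) = f_A(\sigma) + \epsilon$—is obtained exactly as in the proof of Lem. \ref{lem:disp}, now with the rank one matrix ${\bf A}_1 = {\bf u}{\bf v}^T$ in place of ${\bf L}_p = {\bf a}_p{\bf b}_p^T$. Concretely, I would write $h_A(\sigma,\epsilon) = \det(\sigma {\bf B}_0 - {\bf A}_0 - \epsilon {\bf u}{\bf v}^T) = h_0(\sigma)(1 - \epsilon\, {\bf v}^T(\sigma {\bf B}_0 - {\bf A}_0)^{-1}{\bf u}) = h_0(\sigma) + \epsilon\, h_0(\sigma)/f_A(\sigma)$, invoke the rank one determinant identity $\det({\bf I} + {\bf u}{\bf v}^T) = 1 + {\bf v}^T{\bf u}$ and the Cramer's rule observation (exactly as in the Remark after Lem. \ref{lem:disp}) that $h_0(\sigma)/f_A(\sigma)$ is a polynomial with no spurious zeros, and then divide through by it to recover $g_A(\sigma,\epsilon) = f_A(\sigma) + \epsilon = 0$. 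This is verbatim the argument already given, so I would simply cite it rather than repeat it.

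For the second claim, the key link is Lem. \ref{lem:rank1}, which gives $f_p(\sigma) = -f_A(\sigma) f'_A(\sigma_p)$. The factor $f'_A(\sigma_p)$ is a constant independent of the summation variable, so it pulls out of the sum in Thm. \ref{thm:one}. Starting from $I_p = -\sum_m r_m / f_p(\sigma_p - \mu_m)$ and substituting $f_p(\sigma_p - \mu_m) = -f_A(\sigma_p - \mu_m) f'_A(\sigma_p)$ gives
\begin{equation}
I_p = -\sum_m \frac{r_m}{-f_A(\sigma_p-\mu_m) f'_A(\sigma_p)} = \frac{1}{f'_A(\sigma_p)} \sum_m \frac{r_m}{f_A(\sigma_p - \mu_m)},
\end{equation}
which is precisely Eqn. (\ref{eqn:Ipfp}). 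I would note that the hypotheses of Thm. \ref{thm:one} are being carried over wholesale—$G$ meromorphic and $f'_p(\sigma_k) \neq 0$ for each $k$—and observe that, via the identity $f_p = -f_A f'_A(\sigma_p)$, the condition $f'_p(\sigma_k) \neq 0$ is equivalent to $f'_A(\sigma_k) \neq 0$, so the hypothesis can be restated cleanly in terms of $f_A$ alone.

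There is no serious obstacle here: the theorem is a corollary obtained by chaining Lem. \ref{lem:disp}'s argument, Lem. \ref{lem:rank1}, and Thm. \ref{thm:one}. The only point requiring a modicum of care is making sure that $f'_A(\sigma_p) \neq 0$ so that the division is legitimate and the equivalent characteristic equation for ${\bf A}_p$ is genuinely well-posed; this is where I would lean on the fact that $\sigma_p$ is a simple root of $h_0$ in the cases of interest (equivalently, that $d\hatsig_p/d\epsilon|_{\epsilon=0} = \chi_{pp} = \overline{\reig}_p^T {\bf u}{\bf v}^T\leig_p = -1/f'_A(\sigma_p)$ is finite and nonzero), as already established in the proof of Lem. \ref{lem:rank1}. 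Beyond that, the proof is a one-line substitution, and I would present it as such.
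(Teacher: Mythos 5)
Your proposal is correct and follows essentially the same route as the paper, which obtains the first claim by repeating the rank one determinant manipulation of Lem.~\ref{lem:disp} with ${\bf A}_1={\bf u}{\bf v}^T$ in place of ${\bf L}_p$, and the second by combining Thm.~\ref{thm:one} with the substitution $f_p(\sigma)=-f_A(\sigma)f_A'(\sigma_p)$ from Lem.~\ref{lem:rank1}. Your added remark that $f_p'(\sigma_k)\neq 0$ is equivalent to $f_A'(\sigma_k)\neq 0$ is a small but accurate clarification of the hypotheses.
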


In order to evaluate 
$\lambda_2$ using Eqn. (\ref{eqn:lam2}), it is necessary to  evaluate
$\chi_{pq} \chi_{qp}$, and $\chi_{pp} \chi_{qq}$.  
Using Lems. \ref{lem:chipk} and \ref{lem:rank1}, we see that
\begin{equation}
\chi_{pq}\chi_{qp} = \frac{ 1}{ f'_A(\sigma _q) f'_A(\sigma_p) }.
\mylabel{eqn:chiij}
\end{equation}
Using Eqn. (\ref{eqn:chipp}) and Eqn. (\ref{eqn:defchi}), we see that
\begin{equation}
\chi_{pp} = - \frac{1}{ f'_A(\sigma_p) }.
\mylabel{eqn:chipp2}
\end{equation}
Eqns. (\ref{eqn:chiij}) and (\ref{eqn:chipp2}) yield  the following
theorem.

\begin{thm}
Assuming ${\bf A} _1$ is a rank one matrix and that
$f_A(\sigma)$ is defined as in Eqn. (\ref{eqn:fA}) and
satisfies $f_A'(\sigma_k) \neq 0$ for $k=p,q$, we have
\begin{equation}
\chi_{pp} \chi_{qq} = \chi_{pq} \chi_{qp} = \frac{1}{f_A'(\sigma_q) f_A'(\sigma_p) }.
\end{equation}
\mylabel{thm:three}
\end{thm}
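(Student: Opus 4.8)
The plan is to read Thm.~\ref{thm:three} off directly from the two identities established immediately before its statement, Eqn.~(\ref{eqn:chiij}) and Eqn.~(\ref{eqn:chipp2}), together with one structural remark about rank-one ${\bf A}_1$. The cleanest way to get the first equality $\chi_{pp}\chi_{qq}=\chi_{pq}\chi_{qp}$ is to exploit that ${\bf A}_1={\bf u}{\bf v}^T$ forces each coefficient $\chi_{ij}=\ip{\reig_i}{{\bf A}_1\leig_j}=\overline{\reig}_i^T{\bf u}{\bf v}^T\leig_j$ to factor: setting $a_i:=\overline{\reig}_i^T{\bf u}$ and $b_j:={\bf v}^T\leig_j$ we have $\chi_{ij}=a_i b_j$, so $\chi_{pp}\chi_{qq}=(a_pb_p)(a_qb_q)=(a_pb_q)(a_qb_p)=\chi_{pq}\chi_{qp}$ with no computation. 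I would note in passing that this identity is genuinely special to the rank-one case and fails for general ${\bf A}_1$.

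It then remains to identify the common value with $1/(f_A'(\sigma_q)f_A'(\sigma_p))$, and here I would simply assemble pieces already in hand. First, from Lem.~\ref{lem:chipk} and Lem.~\ref{lem:rank1} one has $f_p=-f_A\,f_A'(\sigma_p)$, a constant multiple of $f_A$, hence $f_p'(\sigma_k)=-f_A'(\sigma_k)f_A'(\sigma_p)$ and $\chi_{pk}\chi_{kp}=-1/f_p'(\sigma_k)=1/(f_A'(\sigma_k)f_A'(\sigma_p))$; taking $k=q$ gives Eqn.~(\ref{eqn:chiij}), that is, $\chi_{pq}\chi_{qp}=1/(f_A'(\sigma_q)f_A'(\sigma_p))$. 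Independently, the perturbation-theory computation carried out in the proof of Lem.~\ref{lem:rank1} already shows $\chi_{pp}=\overline{\reig}_p^T{\bf u}{\bf v}^T\leig_p=-1/f_A'(\sigma_p)$ (Eqn.~(\ref{eqn:chipp2})): differentiating $({\bf A}_0+\epsilon{\bf A}_1)\leig_p(\epsilon)=\sigma_p(\epsilon){\bf B}_0\leig_p(\epsilon)$, left-multiplying by $\overline{\reig}_p^T$, and using the normalization $\overline{\reig}_p^T{\bf B}_0\leig_p=1$ from Eqn.~(\ref{eqn:norm}) yields $\chi_{pp}=d\sigma_p/d\epsilon|_{\epsilon=0}$, which equals $-1/f_A'(\sigma_p)$ by implicit differentiation of $f_A(\sigma_p(\epsilon))+\epsilon=0$; the same holds with $q$ in place of $p$. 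Multiplying gives $\chi_{pp}\chi_{qq}=1/(f_A'(\sigma_p)f_A'(\sigma_q))$, matching $\chi_{pq}\chi_{qp}$, and the theorem follows.

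I do not expect any real obstacle: every ingredient was proved en route to Thm.~\ref{thm:two} and Lem.~\ref{lem:rank1}, so the argument is bookkeeping. The only points needing a little care are (i) invoking the hypothesis $f_A'(\sigma_k)\neq0$ for $k=p,q$ so that every reciprocal that appears is well-defined, and (ii) keeping the inner-product conventions straight --- $\chi_{ij}$ is built from the \emph{standard} inner product while the eigenvectors are normalized in the $B$-inner product, and it is exactly Eqn.~(\ref{eqn:norm}) that lets one replace $\overline{\reig}_p^T{\bf B}_0\leig_p$ by $1$ in the step identifying $\chi_{pp}$ with $d\sigma_p/d\epsilon|_{\epsilon=0}$. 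If a fully self-contained derivation is preferred, one may omit the factorization remark and combine Eqns.~(\ref{eqn:chiij}) and~(\ref{eqn:chipp2}) verbatim.
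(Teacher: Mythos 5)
Your proposal is correct and follows essentially the same route as the paper: the paper also obtains $\chi_{pq}\chi_{qp}=1/(f_A'(\sigma_q)f_A'(\sigma_p))$ by combining Lem.~\ref{lem:chipk} with Lem.~\ref{lem:rank1}, and $\chi_{pp}=-1/f_A'(\sigma_p)$ from Eqn.~(\ref{eqn:chipp}) and Eqn.~(\ref{eqn:defchi}), then multiplies. Your additional observation that the rank-one factorization $\chi_{ij}=a_ib_j$ gives $\chi_{pp}\chi_{qq}=\chi_{pq}\chi_{qp}$ for free is a pleasant structural shortcut the paper leaves implicit, but it does not change the substance of the argument.
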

Thus, if  ${\bf A} _1$ is rank one, the terms in the
expression for $\lambda_2$ given in Eqn. (\ref{eqn:lam2})
can all be computed in terms of $f_A$, $\sigma_p$, $\sigma_q$,
and the poles and residues of $G$. Moreover, $f_A(\sigma)$ 
is straightforward to compute, as it
only requires knowing ${\bf B}_0$, ${\bf A}_0$, and ${\bf A}_1$.

\section{A Simple Mechanical System}
\mylabel{sec:mech}

In this section, we consider a simple mechanical example 
where ${\bf A} _1$ is rank one and hence allows us to use 
Thms. \ref{thm:two} and \ref{thm:three}.

We consider a pendulum attached to a support (the pivot) 
constrained to move along a horizontal line ($x$ direction) 
that is vibrated up and down vertically ($z$ direction). 
The motion of this system is described 
in the accelerated reference frame in which this line is fixed. 
In this frame, the system experiences an effective time-varying 
gravitational field given by the line's vertical acceleration. 
We denote the $x$ coordinate of the support by $\xi$ 
and the angle of the pendulum relative to 
the downward (negative) $z$ direction by $\theta$.  
The support has a mass of $m_S$, 
the bob at the end of the pendulum has a mass of $m_P$, 
and the pendulum shaft is massless and has a length of $\ell$. 
At any instant, the position of the support is denoted by ${\bf r}_S$, 
and the position of the pendulum bob is denoted by ${\bf r}_P$. 
These vectors are thus expressed as 
\begin{equation}
{\bf r}_S = \twovec{ \xi }{0},\qquad
{\bf r}_P = \twovec{ \xi}{0} - \ell {\bf e} _r(\theta),\qquad
{\bf e}_r = \twovec{ -\sin\theta }{ \cos\theta }.
\end{equation}
We  suppose that the support is attached to a linear spring
with spring constant $K_S$ and that the mass on the pendulum is
acted on by a  spatially uniform gravitational field with acceleration constant $g_0$.
In the reference frame of the pendulum, when a vertical forcing is applied,
the gravitational acceleration varies in time, which we write as
$g(t) = g_0 + \epsilon f(t)$.   
The Lagrangian of this system can be written as
\begin{equation}
\mathcal{L} = \frac{1}{2} m_S \dotxi^2 + \frac{1}{2} m_P
\left|  \dotxi {\bf e} _x - \ell  \dotthe \pdone{{\bf e} _r}{\theta} \right|^2
+ m_P \ell g(t) {\bf e} _z \cdot  {\bf e} _r(\theta) 
- \frac{1}{2} K_S \xi^2.
\end{equation}
Here, ${\bf e} _x$ and ${\bf e} _z$ are the unit vectors in
the $x$ and $z$ directions.
We  also assume that the spring attached to the support has damping 
$\gamma_S$ and that the pendulum has damping $\gamma_P$.  
The linearized equations of motion about the
equilibrium $\xi=0$, $\theta =0$ are given by 
\begin{equation}
{\bf M} {\bf \ddot z }  + {\bf C} {\bf \dot z }  + {\bf K} {\bf z} =0,\qquad
{\bf z} = \twovec{\xi}{\theta} ,
\mylabel{eqn:MCK}
\end{equation}
where
\begin{equation}
{\bf M} = \twomat{ m_S+ m_P}{\ell  m_P  }
                 {\ell m_P }{ \ell^2 m_P},\quad
{\bf C} = \twomat{ \gamma_S }{0}
                 { 0}{\gamma_P},\quad
{\bf K} = \twomat{ K_S}{0}{0}{g(t) \ell}.
\mylabel{eqn:K}
\end{equation}
Using $g(t) = g_0 + \epsilon f(t)$, we write ${\bf K} = {\bf K}_0 + \epsilon f(t) {\bf K}_1$,
where ${\bf K}_0 = {\rm diag}(K_S,g_0 \ell)$ and ${\bf K}_1 = {\rm diag}(0,\ell)$
are the unperturbed and perturbed parts of ${\bf K}$. 
Writing  Eqn. (\ref{eqn:MCK}) as a first-order system of equations, we obtain
a system as in Eqn. (\ref{eq:x}) with
\begin{equation}
{\bf B}_0 = \twomat{{\bf I}}{ {\bf 0}}{ {\bf 0} }{ {\bf M} },\quad
{\bf A}_0 = \twomat{ {\bf 0} }{ {\bf I} }{ -{\bf K}_0 }{ -{\bf C} },\quad
{\bf A}_1 = {\bf u}{\bf v}^T,
  \mylabel{eqn:first}
\end{equation}
where ${\bf u}=(0,0,0,-\ell)^T$ and ${\bf v} = (0,1,0,0)^T$.

Since ${\bf A}_1$ is rank one, we can apply the results of \S \ref{sec:rank1}.
Thus, we can compute $f_p$ by computing $f_A$ as in Eqn. (\ref{eqn:fA}).
Indeed, the  equivalent characteristic equation
\begin{equation}
h(\sigma) =\det(\sigma {\bf B}_0 - {\bf A}_0 - \epsilon {\bf A_1})=
 \det \left( \sigma^2 {\bf M} + \sigma {\bf C} + 
{\bf K}_0 + \epsilon {\bf K}_1 \right)=0
\end{equation}
 depends linearly on $\epsilon$ and
leads  to an explicit formula for $f_A$.  
In particular, we have
\begin{equation}
h(\sigma) = h_0(\sigma) + \epsilon h_1(\sigma)
\end{equation}
where
\begin{equation}
h_0(\sigma) := \sigma^4 \ell^2 m_P m_S + \sigma^3\left(  \gamma_P 
\left( m_P  + m_S \right) + \ell^2 \gamma_S m_P \right)
+ \sigma^2 \left( \gamma_S \gamma_P + K_s \ell^2 m_p \right)
+ \sigma \gamma_P K_S +g_0 h_1(\sigma),
\end{equation}
and
\begin{equation}
h_1(\sigma) := \sigma^2 \ell \left( m_P + m_S \right) + \sigma 
\gamma_S \ell + K_S \ell.
\end{equation}
Finally, we arrive at an equivalent characteristic equation
\begin{equation}
  h_A(\sigma) = f_A(\sigma) +  \epsilon=0,
\qquad \mbox{where}\quad
f_A(\sigma) = \frac{ h_0(\sigma) }{h_1(\sigma) }.
\end{equation}
We have computed $I_p$ for this system using 
Eqn. (\ref{eqn:Ip}), which involves computing  all of the
eigenvalues, eigenvectors, and coefficients $\chi_{pk}$
using inner products. We have also computed $I_p$ 
using Eqn. (\ref{eqn:Ipfp})
which involves computing the poles and residues of $G$,
$f_A(\sigma_p-\mu_m)$, and $f'_A(\sigma_p)$. 
These calculations were carried out using 
the extended power spectral density described in Appendix B.  
The two ways of doing the calculations  agree to within machine precision. 
Having an explicit formula for $f_A$, as above, allows one to 
avoid any computation involving the eigenfunctions and adjoint
eigenfunctions. The expression as a sum over the poles of $G$
allows one to avoid computing sums with many terms, which is the case when
Eqn. (\ref{eq:x}) is a large system. 
In this low-dimensional example, 
little effort is saved by using Eqn. (\ref{eqn:Ipfp}), 
but, for larger systems and for PDEs,
the formula for $I_p$ in Eqn. (\ref{eqn:Ipfp})
is much simpler to use than the formula in Eqn. (\ref{eqn:Ip}).

\section{Application to Stochastic Faraday Waves}
\mylabel{sec:far}

Here, we  briefly outline how Thms. \ref{thm:two} and \ref{thm:three} 
apply to the case of viscous capillary gravity waves 
and hence can be used to analyze stochastically induced Faraday waves.  

For small amounts of damping, 
it is possible to model Faraday waves 
using the damped Mathieu equation.
In \cite{zhang,repetto,berthet}, 
the stability of stochastically forced Faraday waves
was analyzed using the Mathieu equation and the results in \cite{kampen} 
for the stochastically forced Mathieu equation. 
In \cite{kumar,cerda2,cerda1,kumar2}, 
 deterministic viscous Faraday waves were analyzed
without making the small-damping approximation. 
We analyze the stability of 
stochastically forced Faraday waves 
without making the small-damping approximation 
but instead using the full linearized Navier-Stokes equations.

Thus, we consider the linearized equations 
for viscous capillary gravity waves.  
We assume that our system is in a time-varying gravitational field. 
The time variation arises from the fact that the container
holding the liquid is being moved up and down vertically 
and that the liquid motion is considered 
in a non-inertial frame of reference moving with the container.  

We Fourier-transform these equations in the horizontal direction, 
and, without loss of generality, 
we assume that our disturbance is two-dimensional, 
depending only on the horizontal coordinate $x$ 
and the vertical coordinate $z$.  
We assume that all quantities vary like
\begin{equation}
q(x,z,t) = q(z,t) e^{ i \alpha x}.
\end{equation}
With all of this in mind, 
the $x$ and $z$ components of the horizontally Fourier-transformed, 
linearized Navier-Stokes equations
with the hydrostatic pressure subtracted are given by 
\begin{equation}
\rho \pdone{u}{t} + i \alpha p  = \mu \left( \odtwo{u}{z} - \alpha^2 u \right),
\mylabel{eqn:momx0}
\end{equation}
\begin{equation}
\rho \pdone{w}{t} + \odone{p}{z}  = \mu \left( \odtwo{w}{z} - \alpha^2 w \right),
\mylabel{eqn:momz0}
\end{equation}
and the continuity equation is given by 
\begin{equation}
i \alpha u + \odone{w}{z}=0.
\mylabel{eqn:cont}
\end{equation}
Here  $(u,v,p)$ are the horizontal velocity, the vertical velocity, and
the pressure with the hydrostatic component removed.  

We apply a no-slip boundary condition at $z=-L$:
\begin{equation}
u(-L,t) =w(-L,t) =0.
\mylabel{eqn:bcbot}
\end{equation}
If $h(t)$ is the spatially Fourier-transformed height of the free surface,
the linearized kinematic boundary condition is given by 
\begin{equation}
\odone{h}{t} = w \;\;\;\;\mbox{ at $z=0$}.
\mylabel{eqn:kinem}
\end{equation}
The dynamic boundary conditions are given by 
\begin{equation}
2 \mu \odone{w}{z}- p = -\left( T  \alpha^2 +   
\rho \left( g_0 + \epsilon f(t) \right)  \right) h  
\;\;\;\;\mbox{ at $z=0 $} ,
\mylabel{eqn:bc1}
\end{equation}
\begin{equation}
\odone{u}{z} + i \alpha w =0  \;\;\;\;\mbox{ at $z =0 $}.
\mylabel{eqn:bc2}
\end{equation}
The function $f(t)$ in Eqn. (\ref{eqn:bc1}) corresponds to the
time-varying effective gravitational field 
caused by the vertical vibration of the container.  

If we were to spatially discretize this system in the $z$ direction, 
we would end up with a system of equations of the form
\begin{equation}
{\bf M} \odone{{\bf q}}{t} = {\bf K}_0  {\bf q}  +  \epsilon f(t) 
{\bf K} _1 {\bf q}, 
\mylabel{eqn:difalg}
\end{equation}
where ${\bf q}$ is a vector containing the discretization of 
$(u,w,p)$, along with $h$, and ${\bf M}$, ${\bf K} _0$, and ${\bf K} _1$ 
are matrices that depend on the wavenumber $\alpha$.  
Due to the continuity equation, the matrix ${\bf M}$ would be singular, 
thus yielding a system of differential algebraic equations
with KKT-like structure (as discussed in Appendix C.  
In Appendix C, we show that the results in \S \ref{sec:rev}
apply to such systems.

It is crucial to note that the dependence 
on the parameter $\epsilon$ is rank one. 
That is, all occurrences of the parameter $\epsilon $ 
in our governing equations multiply the unknown $h$. 
In our discretized equations of motion, 
this situation would yield the term
${\bf K} _1 = {\bf a} {\bf b} ^T $, 
where ${\bf b}$ is a vector that is
all zeros except for the component involving $h$. 
Thus, ${\bf K} _1$ is rank one.  

When the depth $L$ of the container is infinite, 
the dispersion relation for these equations is given by 
\begin{equation}
g_D(\sigma,\epsilon)= 
f_D(\sigma)  +  \epsilon,
\mylabel{eqn:fulldisp0}
\end{equation}
where

\begin{equation}
f_D(\sigma)= \frac{1}{\alpha}\left( \sigma + 2 \nu \alpha^2 \right)^2
- 4 \alpha^2  \left( \nu \right)^{3/2} \sqrt{ \sigma + \nu \alpha^2 }
+ \frac{ T}{\rho} \alpha^2 + g_0 . 
\mylabel{eqn:fulldisp}
\end{equation}
Since the dependence on $g$ appears as a rank one term 
in the governing equations,
this dispersion relation has the simple form 
that we expect to observe for a rank one system.  

For finite values of $L$, the dispersion relation is more complicated, 
but it still has the simple form required of a rank one system.  
We give this dispersion relation in Eqn. (\ref{eqn:finite}) in Appendix $D$.  

We have carried out calculations for stochastic Faraday waves
using both Thm. \ref{thm:two} and the formulation in \cite{blass_rom}.
We use the following parameter values: 
a liquid density of $\rho = 0.95\; {\rm g/cm^3}$, 
a liquid kinematic viscosity of $\nu = 0.1\; {\rm cm^2/s}$, 
a liquid-gas surface tension of $T = 70\; {\rm g/s^2}$, 
a steady gravitational acceleration of $g_0 = 1000\; {\rm cm/s^2}$, 
and a wavenumber of $\alpha = 5\; {\rm cm}^{-1}$. 
We consider the power spectral density described in Appendix B. 

Both of these formulations give identical results.
To illustrate this, Table \ref{table:one} 
shows how values from using Eqn. (\ref{eqn:Ip}) to evaluate $I_p$ 
compare to values from using Thm. \ref{thm:two} to evaluate this sum. 
In the table, we calculate the sum in Eqn. \eqref{eqn:Ip} for different values of $N$.
Clearly as $N$ approaches infinity, 
the values from Eqn. (\ref{eqn:Ip}) converge 
to the value from Thm. \ref{thm:two}.  
It is clear that, as $L$ becomes large, 
the number $N$ of terms in Eqn. (\ref{eqn:Ip}) 
must be increased to maintain the same accuracy.  

Table \ref{table:two} shows the difference 
between values from applying Thm. \ref{thm:two} 
to the finite-depth dispersion relation in Eqn. (\ref{eqn:finite}) 
for a given depth $L$ relative to values 
from applying Thm. \ref{thm:two} 
to the infinite-depth dispersion relation in Eqn. (\ref{eqn:fulldisp}).  
The finite-depth values clearly converge to the infinite-depth values 
as $L$ approaches $\infty$.

\begin{table}[htb]
\begin{center}
\begin{small}
\begin{tabular}{|c|l|l|l|l|} \hline
 & error & error & error & error \\ 
$N$ & $L=1\;{\rm cm}$ & $L=2\;{\rm cm}$ & $L=5\;{\rm cm}$ 
& $L=10\;{\rm cm}$\\ \hline
 5     &  6.6522e-03 & 7.9024e-03 & 8.1624e-03 &  8.1793e-03 \\
 10    &  2.1339e-03 & 6.1278e-03 & 7.9499e-03 &  8.1492e-03 \\
 20    &  6.2132e-05 & 1.8267e-03 & 6.7127e-03 &  7.9037e-03 \\
 40    &  4.9696e-07 & 5.3631e-05 & 3.0219e-03 &  6.6182e-03 \\
 80    &  3.9957e-09 & 4.5087e-07 & 1.9059e-04 &  2.9354e-03 \\
 160   &  4.2508e-11 & 3.3930e-09 & 2.0284e-06 &  1.8370e-04 \\
 320   &  7.1332e-13 & 2.5917e-11 & 1.5824e-08 &  1.9844e-06 \\ 
 640   &  1.9300e-14 & 1.9743e-13 & 1.2227e-10 &  1.5650e-08 \\
 1280  &  1.0307e-14 & 6.6570e-15 & 9.5273e-13 &  1.2160e-10 \\
\hline
\end{tabular}
\end{small}
\end{center}
\caption{
This table gives the relative error in the quantity $I_p$ 
calculated using Eqn. (\ref{eqn:Ip}) with $N$ eigenvalues 
compared to the exact expression calculated using Eqn. (\ref{eqn:Ipfp}), 
where the sum is evaluated by taking the residues at the poles of $G$.   
The errors are given for different values of the depth $L$, 
where the physical parameters are 
$\rho = 0.95\; {\rm g/cm^3}$, $\nu = 0.1\; {\rm cm^2/s}$, 
$T = 70\; {\rm g/s^2}$, $g_0 = 1000\; {\rm cm/s^2}$, 
and $\alpha = 5\; {\rm cm}^{-1}$ 
and the power spectral density described in Appendix B is used. 
}
\mylabel{table:one}
\end{table}

\begin{table}[htb]
\begin{center}
\begin{small}
\begin{tabular}{|c|l|l|l|l|l|} \hline
 & $L=0.25\;{\rm cm}$ & $L=0.5\;{\rm cm}$ & $L=1\;{\rm cm}$ 
& $L=2\:{\rm cm}$ & $L=4\;{\rm cm}$ \\ \hline
error & 1.3459e-01 & 1.4134e-02 & 9.7424e-05 & 4.4170e-09 
& 2.2693e-16 \\ \hline
\end{tabular}
\end{small}
\end{center}
\caption{
This table gives the relative error in the quantity $I_p$ 
calculated using the finite-depth dispersion relation 
in Eqn. (\ref{eqn:finite}) from Appendix D 
compared to the value 
calculated using the infinite-depth relation 
in Eqn. (\ref{eqn:fulldisp}) for various depths $L$.  
}
\mylabel{table:two}
\end{table}

\begin{rem}
\mylabel{rem:use}
It is good to compare the cost of 
performing a brute-force calculation using Eqn. (\ref{eqn:Ip}) 
versus applying Thm. \ref{thm:two}. 
When making this comparison,
one should take into account the effort of the analyst, 
not just the computational cost.  
Note that, for our system to be stable, 
it must be stable for all values of $\alpha$. 
Hence, to do a stability analysis, 
it is necessary to sweep through all values of $\alpha$ 
to find the smallest value of $\epsilon$ needed to
make some wavenumber unstable. 
For this reason, it is highly desirable to have a method
that requires little intervention by the analyst. 
When computing the sum for $I_p$ by brute force,
we need to know all of the eigenvalues, eigenfunctions, 
and adjoint eigenfunctions,
as well as how to compute the inner product to determine $\chi_{ij}$.   
Although the eigenvalues can in principle be determined 
using the dispersion relation, 
one needs to have a good initial guess for the eigenvalues 
in order to apply Newton's method.  
Though this is not, in principle, a difficult thing to do, 
it greatly complicates the sweep through the wavenumber $\alpha$.  
A considerable amount of theoretical effort is needed to compute the
quantities $\chi_{ij}$ based on Eqn. (\ref{eqn:defchi}). 

On the other hand, in order to use Thm. \ref{thm:two}, 
all that one needs is the dispersion relation for $\sigma$ 
and the poles and residues of $G$. 
In particular, there is no need to calculate 
the eigenvalues, the eigenfunctions, or the inner products $\chi_{ij}$.  
It should be noted that the ease of this method 
depends on the particular application. 
If one is interested in doing laboratory experiments 
on stochastic Faraday waves, 
then one has control over the power spectral density that is used.
In this case, one could use the power spectral density 
described in Appendix D. 
However, if one is interested in applying the results to 
a power spectral density that has been measured experimentally, 
then it is necessary to do some sort of 
rational function approximation to $G(z)$ first 
and to compute its poles subsequently. 
In this case, applying Thm. \ref{thm:two} 
would require some preliminary steps.  
\end{rem}

\section{Conclusions}

We have derived formulas that allow us to evaluate the stability of
parametrically  forced stochastic equations.  
When the stochastic forcing is multiplying a rank one matrix, 
this leads to an enormous simplification 
over a brute-force implementation of the expressions in \cite{blass_rom}, 
especially when the dimension of the system is large.  
We have shown how our results apply to a simple mechanical example, 
as well as to stochastically forced Faraday waves.

\section{Appendix A }

The  purpose of this appendix is to show how the
results in \cite{blass_rom} can be extended to apply
to systems with more general mass matrices.  That is,
we relax the assumption that the matrix ${\bf B}_0$ is Eqn. (\ref{eq:x})
is given by  ${\bf B}_0 = {\bf I}$  and
replace it with the assumption that ${\bf B}_0$ is symmetric positive
definite.   
More precisely, in this appendix, we show that if the results summarized in
\S \ref{sec:rev} hold for systems where ${\bf B}_0$ is the identity, matrix, then they 
 hold for systems where ${\bf B}_0$ is a symmetric positive definite matrix.

In order to apply the results with ${\bf B}_0 = {\bf I}$ to the
case of more general ${\bf B}_0$,  we can introduce the vector
\begin{equation}
{\bf q} = {\bf B}_0 ^{1/2} {\bf x}. 
\end{equation}
The equation for ${\bf q}$ can be written as
\begin{equation}
\odone{{\bf q}}{t} = {\bf B}_0 ^{-1/2} 
\left( {\bf A} _0 + \epsilon s(t) {\bf A} _1 \right) {\bf B}_0^{-1/2} {\bf q}.
\end{equation}
This is in the form  of Eqn. (\ref{eq:x}).
We need to know the eigenvalues of
\begin{equation}
{\bf C}  = {\bf B}_0^{-1/2} {\bf A} _0 {\bf B}_0 ^{-1/2}.
\end{equation}
It is a straightforward exercise to show that the  eigenvectors 
of ${\bf C}$ 
satisfy
\begin{equation}
{\bf q} = {\bf B}_0 ^{1/2} {\bf u}\qquad
\mbox{provided}\quad {\bf A} _0 {\bf u} = \lambda {\bf B}_0 {\bf u}.
\end{equation}
Similarly, the adjoint eigenvectors of ${\bf C} $ satisfy
\begin{equation}
{\bf p} = {\bf B}_0 ^{1/2} {\bf v}\qquad
\mbox{provided}\quad {\bf A} _0^T {\bf v} = \lambda {\bf B}_0 {\bf v}.
\end{equation}
The eigenvectors ${\bf p}$ and ${\bf q} $ should be normalized so that
\begin{equation}
\overline{{\bf p}_i}^T {\bf q} _j = \delta_{ij}.
\end{equation}
This is  so provided 
\begin{equation}
\overline{{\bf v}_i}^T {\bf B}_0 {\bf u} _j = \delta_{ij}.
\end{equation}
That is, the eigenvectors ${\bf u} _i$ and ${\bf v} _j$
must be normalized using the inner product associated with ${\bf B}_0$.  
In order to compute $\chi_{ij}$, we need to compute
\begin{equation}
\chi_{ij} =  \overline{{\bf v} _i}^T {\bf A} _1 {\bf u} _j .
\end{equation}

This completes the proof that if the results in \S \ref{sec:rev} hold
for ${\bf B}_0 ={\bf I}$, then they hold for arbitrary symmetric
positive definite ${\bf B}_0$.

\section{Appendix B }
\mylabel{sec:G}

In this appendix,  we discuss a particular choice of the function
$G(z)$ that  allows us to exercise the formulas in this paper.
In the applications we are concerned with, the function $f(t)$ whose
power spectral density we are evaluating is an acceleration.  The power spectral density of the acceleration is $\omega^4$ times the power spectral density of the displacement.  For this reason, we  require that our power spectral density 
is proportional to $\omega^4$ for small values of $\omega$.  

Using the definition of $G(z)$ in Eqn. (\ref{eq:Gdef})  and the
fact that $R(\tau)$ is the inverse Fourier transform of the
power spectral density $S(\omega)$, we see that
\begin{equation}
G(z) =  \frac{1}{2\pi} \int_0^{\infty} \infint S(\omega)  e^{i \omega \tau}
e^{ - \tau z} d \omega d \tau .
\end{equation}
Reversing the order of integration and integrating with respect to
$\tau$ gives
\begin{equation}
G(z) = \frac{1}{2 \pi  } \infint \frac{S(\omega) }{z - i \omega}
d \omega .
\mylabel{eqn:GS}
\end{equation}

We use the power spectral density
\begin{equation}
S(\omega) = \frac{ \omega^4 a^3}{\pi A_{nor} }
\left( \frac{1}{ a^8 + (\omega- \omega_0)^8 }
+ \frac{1}{ a^8 + (\omega+ \omega_0)^8 }
\right),
\mylabel{eqn:S1}
\end{equation}
\begin{equation}
A_{nor} = 
\left((1+\sqrt{2}) \omega_0^4/a^4 + 6 \omega_0^2/a^2 + 1 \right) 
\left( \sqrt{ 1 - 1/\sqrt{2} } \right) .
\end{equation}
This power spectral density has the property 
that $S(\omega) = S( - \omega)$, 
which corresponds to a real-valued autocorrelation function $R(\tau)$.  
It is normalized so that
\begin{equation}
\infint S(\omega) d \omega =1.
\end{equation}
For small values of $a$, 
this power spectral density produces narrow-band noise, 
with the energy concentrated around $\omega = \pm \omega_0$. 
For larger values of $a$, 
this power spectral density produces wide-band noise.  
With $S(\omega)$ chosen as in Eqn. (\ref{eqn:S1}), 
the function
$G(z)$ can be evaluated using contour integration.  
We get
\begin{equation}
G(z) =  \frac{1}{ 8 i \pi  a^4 A_{nor} }
\left( G_0(z,\omega_0) + G_0(z, - \omega _0) \right),
\end{equation}
where
\begin{equation}
G_0(z,\omega_0) =  \sum_{k=0}^3 
\frac{ \gamma_k \left( a \gamma_k + \omega_0 \right)^4
}{ z - i \left( a \gamma _k + \omega_0 \right) }
\;\;\;\mbox{ $\gamma_k = i e^{-3 i\pi/8} e^{ik \pi/4} $}.
\end{equation}

\section{Appendix C}

In this appendix, we show that the results in \cite{blass_rom} apply to 
systems that have a KKT-like structure. That is, to systems of the form: 
\begin{equation}
{\bf M} _0  \odone{{\bf u}}{t} = \left( {\bf K} _0 + \epsilon f(t) {\bf K} _1 \right)
{\bf u} + {\bf C} ^T {\bf p} 
\mylabel{eqn:dae1},
\end{equation}
\begin{equation}
{\bf C} {\bf u} =0.
\mylabel{eqn:dae2}
\end{equation}
Here, we  assume that ${\bf M} _0$ is symmetric positive definite.  
Among other places, systems of this type appear   when discretizing 
the equations of fluid dynamics.  Using the matrices
\begin{equation} 
{\bf B}_0 = \twomat{ {\bf M} _0}{ {\bf 0} }{ {\bf 0}^T }{0},\;\;
{\bf A} _0 = \twomat{ {\bf K} _0 }{ {\bf C} ^T}{ {\bf C} }{0},\;\;
{\bf A} _1 = \twomat{ {\bf K} _1 }{ {\bf 0} ^T}{ {\bf 0} }{0},
\mylabel{eqn:KKTB}
\end{equation}
  it is possible to write this
system as in Eqn. (\ref{eq:x}), where the mass matrix ${\bf B}_0$ is
singular.  Since the resulting mass matrix is singular, it is not
possible to directly apply the results in \S \ref{sec:rev} and  Appendix A. However, due to the special structure of this singular mass
matrix, it is possible to extend the results of  \S \ref{sec:rev} and Appendix A so 
that they apply to 
the system of equations as in Eqns. (\ref{eqn:dae1}) and (\ref{eqn:dae2}).
Thus, the purpose of this appendix
is to show that the results in \S \ref{sec:rev}  
apply to systems that have  KKT-like  structure as in  Eqns. (\ref{eqn:KKTB}).

Assuming the  matrix ${\bf C}$ has fewer rows than columns,
we can write 
\begin{equation}
{\bf C} ^T = {\bf Q} {\bf R} ,
\end{equation}
where ${\bf R}$ is a non-singular matrix and ${\bf Q}$ is a matrix whose
columns are orthonormal.
The projection matrix ${\bf P}$ can be chosen to be
a matrix whose columns are orthonormal and  orthogonal  to 
the columns of  ${\bf Q}$.  This gives us
\begin{equation}
{\bf Q}^T {\bf P} =0,
\end{equation}
\begin{equation}
{\bf P} ^T {\bf P} = {\bf I}
\end{equation}
It is clear that 
\begin{equation}
{\bf P} ^T {\bf q} =0 \implies  \mbox{ $\exists {\bf p} $  such that }
{\bf q} = {\bf C} ^T {\bf p},
\mylabel{eqn:implies}
\end{equation}
\begin{equation}
{\bf C} {\bf u} =0  \implies  \mbox{ $\exists {\bf z} $  such that }
{\bf u} = {\bf P}  {\bf z}.
\mylabel{eqn:implies2}
\end{equation}

Eqns. (\ref{eqn:implies}) and (\ref{eqn:implies2}) show that 
we can write 
Eqns. (\ref{eqn:dae1}) and (\ref{eqn:dae2}) as 
\begin{equation}
{\bf P}^T \left( {\bf M} _0 {\bf P} 
\odone{{\bf z}}{t} - \left( {\bf K} _0 + \epsilon f(t) {\bf K} _1 \right)
 {\bf P} {\bf z} \right) =0
\mylabel{eqn:reduce}
\end{equation}
The matrix ${\bf P} ^T {\bf M} _0 {\bf P} $ is non-singular; hence,
we can apply the results  for equations where ${\bf M}$ is non-singular.
We can now apply the results in \S \ref{sec:rev} to Eqn. (\ref{eqn:reduce}).  
If we do this, we get the 
 eigenvalue problem
\begin{equation}
\sigma_k {\bf P} ^T  {\bf M} _0 {\bf P} =  {\bf P} ^T{\bf A} _0 {\bf P} {\bf z}_k .
\end{equation}
If ${\bf z}_k$ is an eigenvector of this eigenvalue problem, then we can
define ${\bf u} _k = {\bf P} {\bf z} _k$, and ${\bf p} _k$ such that
\begin{equation}
\sigma_k {\bf M} _0 {\bf u}_k = {\bf K} _0 {\bf u}_k + {\bf C} ^T {\bf p}i_k,
\end{equation}
\begin{equation}
{\bf C} {\bf u}_k =0,
\end{equation}
\begin{equation}
\leig _k = \twovec{ {\bf P} {\bf z} _k }{ {\bf p} _k}= \twovec{ {\bf u} _k}{{\bf p} _k}.
\end{equation}
Thus, the vector $\leig_k$
is an eigenvector of
the eigenvalue problem in Eqn. (\ref{eqn:geneig}) with the
 matrices defined as in Eqn. (\ref{eqn:KKTB}).   
A similar result holds for the adjoint eigenvector $\reig_k$.  
It is straightforward to show that the eigenvectors and adjoint eigenvectors
based on Eqn. (\ref{eqn:reduce})  satisfy the same normalization conditions as in
\S \ref{sec:rev} and  yield the same constant $\chi_{ij}$
provided we use the matrices defined as in Eqn. (\ref{eqn:KKTB}).

\section{Appendix D}

In this appendix, we  give the dispersion  relation for
the eigenvalues associated with capillary gravity waves 
as discussed in \S \ref{sec:far}.  
In particular, we consider Eqns. (\ref{eqn:momx0}), (\ref{eqn:momz0}),
and (\ref{eqn:cont}), along with the boundary conditions
in Eqns. (\ref{eqn:bcbot}), (\ref{eqn:kinem}), (\ref{eqn:bc1}), and  (\ref{eqn:bc2}).
If we  assume the  temporal  dependence 
of $(u,w,p,h)$ is of the form 
$e^{\sigma t}$, we get an eigenvalue problem for $\sigma$.  
In this appendix, we
give the   one-dimensional transcendental equation for the
eigenvalues $\sigma$  of this system of equations.

In writing down the dispersion relation, we use the parameter
\begin{equation}
\tau = T \alpha^2 + \rho g_0 .
\mylabel{eqn:tau}
\end{equation}
We can write the dispersion relation for $\sigma$ as 
$H(\sigma)=0$, where 
\begin{align}
  H(\sigma) = d_0 + \alpha d_s \sinh (\alpha L)\sin(\beta L)
+\beta d_c \cosh(\alpha L)\cos(\beta L) + 
\notag\\  \tau d_\tau 
(\alpha \cosh(\alpha L)\sin(\beta L)-\beta \sinh(\alpha L)\cos(\beta L))
  \mylabel{disper}
\end{align}
and 
\begin{subequations}\mylabel{constants}
\begin{align}
&  d_0 = -4\alpha^2\beta\nu(2\alpha^2 \nu+\sigma)\\
& d_c = \sigma^2+4\alpha^2\nu \sigma + 8 \alpha^4\nu^2\\
& d_s = -\sigma^2-8\alpha^2\nu \sigma - 8 \alpha^4\nu^2
=-d_c-4\alpha^2\nu \sigma\\
& d_\tau = -\frac{\alpha}{\rho}
\end{align}
\end{subequations}
and 
\begin{equation} 
\beta = i\sqrt{\frac{\sigma}{\nu}+\alpha^2} .
\end{equation} 

In order to use the results of Thm. \ref{thm:two}, we write this as
\begin{equation}
H(\sigma) = H_0(\sigma) +  \rho g_0 H_1(\sigma) ,
\end{equation}
where 
\begin{align}
  H_0(\sigma) = d_0 + \alpha d_s \sinh (\alpha L)\sin(\beta L)
+\beta d_c \cosh(\alpha L)\cos(\beta L) +
\notag\\ T \alpha^2  d_\tau 
(\alpha \cosh(\alpha L)\sin(\beta L)-\beta \sinh(\alpha L)\cos(\beta L)),
  \mylabel{disper2}
\end{align}
\begin{equation}
  H_1(\sigma) = 
 d_\tau 
(\alpha \cosh(\alpha L)\sin(\beta L)-\beta \sinh(\alpha L)\cos(\beta L)).
\end{equation}
Dividing by $ \rho H_1(\sigma)$, we get the dispersion relation
\begin{equation}
f(\sigma) = f_A(\sigma) + g_0,
\end{equation}
where
\begin{equation}
f_A(\sigma) = \frac{ H_0(\sigma) }{\rho H_1(\sigma ) }.
\mylabel{eqn:finite}
\end{equation}

\bibliography{stoch_03}
\end{document}